\theoremstyle{definition}
\newtheorem{theorem}{Theorem}[section]
\newtheorem{corollary}[theorem]{Corollary}
\theoremstyle{definition}
\theoremstyle{remark}
\theoremstyle{definition}
\newcounter{cnt}
\def\mydggeometry{\makeatletter\dg@YGRID=1\dg@XGRID=20\unitlength=0.003pt\makeatother}
\makeatother \theoremstyle{remark}
\numberwithin{equation}{section}
\let\bwdg\bigwedge
\def\bigwedge{{\textstyle\bwdg}}
\newcommand{\nc}{\newcommand}
\newcommand{\rnc}{\renewcommand}
\nc{\cal}{\mathcal} \nc{\goth}{\mathfrak} \rnc{\bold}{\mathbf}
\nc\bomega{{\mbox{\boldmath $\omega$}}} \nc\bpsi{{\mbox{\boldmath $\Psi$}}}
 \nc\balpha{{\mbox{\boldmath $\alpha$}}}
 \nc\bpi{{\mbox{\boldmath $\pi$}}}
 \nc\bvpi{{\mbox{\boldmath $\varpi$}}}
\nc\chara{\operatorname{ch}}
  \nc\bxi{{\mbox{\boldmath $\xi$}}}
\nc\bmu{{\mbox{\boldmath $\mu$}}} \nc\bcN{{\mbox{\boldmath $\cal{N}$}}} \nc\bcm{{\mbox{\boldmath $\cal{M}$}}} \nc\blambda{{\mbox{\boldmath
$\lambda$}}}\nc\bnu{{\mbox{\boldmath $\nu$}}}
\def\section{\def\@secnumfont{\mdseries}\@startsection{section}{1}%
  \z@{.7\linespacing\@plus\linespacing}{.5\linespacing}%
  {\normalfont\scshape\centering}}
\def\subsection{\def\@secnumfont{\bfseries}\@startsection{subsection}{2}%
  {\parindent}{.5\linespacing\@plus.7\linespacing}{-.5em}%
  {\normalfont\bfseries}}
 \nc{\Hom}{\operatorname{Hom}}
  \nc{\mode}{\operatorname{mod}}
\nc{\End}{\operatorname{End}} \nc{\wh}[1]{\widehat{#1}} \nc{\Ext}{\operatorname{Ext}} \nc{\ch}{\text{ch}} \nc{\ev}{\operatorname{ev}}
\nc{\Ob}{\operatorname{Ob}} \nc{\soc}{\operatorname{soc}} \nc{\rad}{\operatorname{rad}} \nc{\head}{\operatorname{head}}
 \nc{\Cal}{\cal} \nc{\Xp}[1]{X^+(#1)} \nc{\Xm}[1]{X^-(#1)}
\nc{\on}{\operatorname} \nc{\Z}{{\bold Z}} \nc{\J}{{\cal J}}  \nc{\Q}{{\bold Q}}
\nc{\N}{{\bold N}}  \nc\boa{\bold a} \nc\bob{\bold b} \nc\boc{\bold c} \nc\bod{\bold d} \nc\boe{\bold e} \nc\bof{\bold f} \nc\bog{\bold g}
\nc\boh{\bold h} \nc\boi{\bold i} \nc\boj{\bold j} \nc\bok{\bold k} \nc\bol{\bold l} \nc\bom{\bold m} \nc\bon{\mathbb n} \nc\boo{\bold o}
\nc\bop{\bold p} \nc\boq{\bold q} \nc\bor{\bold r} \nc\bos{\bold s} \nc\boT{\bold t} \nc\boF{\bold F} \nc\bou{\bold u} \nc\bov{\bold v}
\nc\bow{\bold w} \nc\boz{\bold z}\nc\ba{\bold A} \nc\bb{\bold B} \nc\bc{\mathbb C} \nc\bd{\bold D} \nc\be{\bold E} \nc\bg{\bold
G} \nc\bh{\bold H} \nc\bi{\bold I} \nc\bj{\bold J} \nc\bk{\bold K} \nc\bl{\bold L} \nc\bm{\bold M} \nc\bn{\mathbb N} \nc\bo{\bold O} \nc\bp{\bold
P} \nc\bq{\bold Q} \nc\br{\bold R} \nc\bs{\bold S} \nc\bt{\bold T} \nc\bu{\bold U} \nc\bv{\bold V} \nc\bw{\bold W} \nc\bz{\mathbb Z} \nc\bx{\bold
x} \nc\KR{\bold{KR}} \nc\rk{\bold{rk}} \nc\het{\text{ht }}
\nc\toa{\tilde a} \nc\tob{\tilde b} \nc\toc{\tilde c} \nc\tod{\tilde d} \nc\toe{\tilde e} \nc\tof{\tilde f} \nc\tog{\tilde g} \nc\toh{\tilde h}
\nc\toi{\tilde i} \nc\toj{\tilde j} \nc\tok{\tilde k} \nc\tol{\tilde l} \nc\tom{\tilde m} \nc\ton{\tilde n} \nc\too{\tilde o} \nc\toq{\tilde q}
\nc\tor{\tilde r} \nc\tos{\tilde s} \nc\toT{\tilde t} \nc\tou{\tilde u} \nc\tov{\tilde v} \nc\tow{\tilde w} \nc\toz{\tilde z} \nc\woi{w_{\omega_i}}
\begin{document}
\setcounter{section}{0}
\setcounter{tocdepth}{1}


\title{A short note on number fields defined by exponential Taylor polynomials }

\author[Anuj Jakhar]{Anuj Jakhar}
\author[Srinivas Kotyada]{Srinivas Kotyada}
\address[Anuj Jakhar]{Department of Mathematics, Indian Institute of Technology (IIT) Madras}
\address[Srinivas Kotyada]{Department of Mathematics, The Institute of Mathematical Sciences (IMSc) Chennai}

\email[Anuj Jakhar]{anujjakhar@iitm.ac.in \\ anujiisermohali@gmail.com}
\email[Srinivas Kotyada]{srini@imsc.res.in}


\subjclass [2010]{11R04; 11R29.}
\keywords{Ring of algebraic integers; Integral basis and discriminant; Monogenic number fields.
}

\begin{abstract}
\noindent  Let $n$ be a positive integer and $f_n(x)= 1+x+\frac{x^2}{2!}+\cdots + \frac{x^n}{n!}$ denote the $n$-th Taylor polynomial of the exponential function. Let $K = \Q(\theta)$ be an algebraic number field where $\theta$ is a root of $f_n(x)$ and $\Z_K$ denote the ring of algebraic integers of $K$. In this paper, we prove that for any prime $p$, $p$ does not divide the index of the subgroup $\Z[\theta]$ in $\Z_K$ if and only if $p^2\nmid n!$. 
\end{abstract}
\maketitle

\section{Introduction and statements of results}\label{intro}
Let $f_n(x)= 1+x+\frac{x^2}{2!}+\cdots + \frac{x^n}{n!}$ denote the $n$-th Taylor polynomial of the exponential function. In 1930, Schur \cite{Sch} proved that the Galois group of $f_n(x)$ is $A_n$, the alternating group on $n$ letters, if $4$ divides $n$ and is $S_n$, the symmetric group on $n$ letters, otherwise. In 1987, Coleman \cite{Col} gave another proof of this result using the theory of Newton polygons. He also provided a simple proof of the irreducibility of $f_n(x)$ over the field $\Q$ of rational numbers. Let $K = \Q(\theta)$ be an algebraic number field and $\Z_K$ denote the ring of algebraic integers of $K$. In the present paper, we would like to characterise all the primes dividing the index of the subgroup $\Z[\theta]$ in $\Z_K$, where $\theta$ a root of $f_n(x)$.  

In 1878, Dedekind  gave a simple criterion, known as Dedekind Criterion (cf.  \cite[Theorem 6.1.4]{HC}, \cite{RD}), which provides a necessary and sufficient condition for a polynomial $f(x)$ to be satisfied so that $p$ does not divide   $[\Z_K : Z[\theta]]$, where $\theta$ is a root of $f(x)$. 
\begin{theorem}\label{dd}
 (Dedekind Criterion) {\it Let $K=\mathbb{Q}(\theta)$ be an
algebraic number field with $f(x)$ as the minimal polynomial of
the algebraic integer $\theta$ over $\mathbb{Q}.$ Let $p$ be a
prime and
$\overline{f}(x) = \overline{g}_{1}(x)^{e_{1}}\ldots \overline{g}_{t}(x)^{e_{t}}$ be
the factorization of $\overline{f}(x)$ as a product of powers of
distinct irreducible polynomials over $\mathbb{Z}/p\mathbb{Z},$
with each $g_{i}(x)\in \mathbb{Z}[x]$ monic. Let $M(x)$ denote the polynomial
$\frac{1}{p}(f(x)-g_{1}(x)^{e_{1}}\ldots g_{t}(x)^{e_{t}})$ with
coefficients from $\mathbb{Z}.$ Then $p$ does not divide
$[\Z_{K}:\mathbb{Z}[\theta]]$ if and only if for each $i,$ we have
either $e_{i}=1$ or $\bar{g}_{i}(x)$ does not divide
$\overline{M}(x).$} 
\end{theorem}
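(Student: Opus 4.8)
The plan is to bypass any direct computation of $\Z_K$ and instead detect, purely inside $\Z[\theta]$, whether the finite group $\Z_K/\Z[\theta]$ has $p$-torsion. Since $\Z[\theta]$ and $\Z_K$ are free $\Z$-modules of rank $n=\deg f$, we have $p\mid[\Z_K:\Z[\theta]]$ if and only if $(\Z_K/\Z[\theta])[p]\neq 0$, i.e. if and only if there is a $\beta\in\Z_K\setminus\Z[\theta]$ with $p\beta\in\Z[\theta]$. Writing $p\beta=c(\theta)$ with $c\in\Z[x]$ of degree $<n$, such a $\beta$ is exactly an element $\tfrac1p c(\theta)\in\Z_K$ whose class $\overline c$ is nonzero in $\overline A:=\F_p[x]/(\overline f)\cong\Z[\theta]/p\Z[\theta]$. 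First I would record that
$$\mathcal J:=\{\,\overline c\in\overline A:\ \tfrac1p c(\theta)\in\Z_K\,\}$$
is a well-defined ideal of $\overline A$ (closed under addition because $\Z_K$ is a group, and under multiplication by $\overline A$ because $\tfrac1p c(\theta)\cdot a(\theta)=\tfrac1p(ca)(\theta)$ and $\Z_K$ is a ring). Thus the theorem reduces to the clean statement $p\nmid[\Z_K:\Z[\theta]]\iff\mathcal J=(0)$, and everything else is the identification of $\mathcal J$ with the factorisation data. Throughout set $g:=\prod_i g_i$, $h:=\prod_i g_i^{e_i-1}$ and $G:=\prod_i g_i^{e_i}$, so that $gh=G$, $\overline g=\prod_i\overline g_i$ is the radical of $\overline f$, and $M=\tfrac1p(f-G)$ yields the identity $g(\theta)h(\theta)=G(\theta)=-p\,M(\theta)$. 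Finally let $I:=p\Z[\theta]+g(\theta)\Z[\theta]$; since the nilradical of $\overline A$ is $(\overline g)$, this $I$ is the radical of $p\Z[\theta]$.

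For the direction ``$p\nmid$ index $\Rightarrow$ condition'' I would argue contrapositively and \emph{exhibit} an integral element when the condition fails. Suppose $e_i\ge 2$ and $\overline g_i\mid\overline M$ for some $i$, and consider
$$\beta:=\frac{1}{p}\,(G/g_i)(\theta)\in K,\qquad G/g_i=g_i^{e_i-1}\prod_{j\ne i}g_j^{e_j}\in\Z[x].$$
I would verify $\beta I\subseteq I$ on the two generators of $I$: first $p\beta=(G/g_i)(\theta)$ reduces to $\overline g_i^{\,e_i-1}\prod_{j\ne i}\overline g_j^{\,e_j}$, which lies in $(\overline g)$ precisely because $e_i\ge 2$; second, using $g(\theta)h(\theta)=-pM(\theta)$ one gets $g(\theta)\beta=-\big(\prod_{j\ne i}g_j\big)(\theta)\,M(\theta)$, which reduces to $-\overline M\prod_{j\ne i}\overline g_j\in(\overline g)$ precisely because $\overline g_i\mid\overline M$. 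Hence $\beta I\subseteq I$, and since $I$ is a nonzero (hence faithful) finitely generated $\Z$-module, the determinant trick shows $\beta$ is integral, i.e. $\beta\in\Z_K$. As $G/g_i$ has degree $<n$ its class is nonzero in $\overline A$, so $\beta\notin\Z[\theta]$ and $\mathcal J\neq(0)$, giving $p\mid[\Z_K:\Z[\theta]]$ whenever the criterion fails.

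The reverse direction, ``condition $\Rightarrow p\nmid$ index'', i.e. $\mathcal J=(0)$, is where I expect the real work. Here I would invoke the standard characterisation of $p$-maximality by the radical idealiser: $\Z[\theta]$ is $p$-maximal if and only if $(I:I):=\{x\in K: xI\subseteq I\}$ equals $\Z[\theta]$ (the algebraic core of the round-$2$/Pohst--Zassenhaus method; see \cite{HC}). Granting this, it suffices to compute $(I:I)$ and show that under the hypothesis every $x\in(I:I)$ already lies in $\Z[\theta]$. Since $p\in I$ forces $px\in I\subseteq\Z[\theta]$, we may write $x=\tfrac1p c(\theta)$, and then the conditions $px\in I$ and $g(\theta)x\in I$ translate into divisibility relations among $\overline c,\overline g,\overline h$ and $\overline M$ in $\F_p[x]$; the hypothesis $\overline g_i\nmid\overline M$ for every repeated factor is exactly what is needed to force $\overline f\mid\overline c$, i.e. $\overline c=0$. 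To keep the bookkeeping honest I would first localise (or complete) at $p$: by Hensel's lemma $\Z[\theta]\otimes\Z_p\cong\prod_i\Z_p[x]/(f_i)$ with $\overline f_i=\overline g_i^{\,e_i}$, which separates the distinct irreducible factors and reduces the computation to one primary factor at a time.

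The main obstacle is precisely this sufficiency direction, and the difficulty is twofold. First, one must justify the idealiser criterion, equivalently the one-step enlargement lemma asserting that a non-$p$-maximal order satisfies $(I:I)\supsetneq\Z[\theta]$; this is the only genuinely nontrivial input and cannot be made fully elementary. Second, the explicit evaluation of $(I:I)$ must be carried out in $\overline A=\F_p[x]/(\overline f)$, which has nilpotents and zero-divisors at every repeated factor, so the transparent polynomial divisions used in the necessity direction must be replaced by careful congruence arguments modulo $\overline f$ (cleanest after the local decomposition above). By contrast the necessity direction is completely explicit through the witness $\beta=\tfrac1p(G/g_i)(\theta)$ constructed above.
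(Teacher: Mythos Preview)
The paper does not prove Theorem~\ref{dd}; it quotes the Dedekind Criterion from the literature (references \cite{HC} and \cite{RD}) and then applies it as a black box in the proof of Theorem~\ref{main}. There is therefore no in-paper argument against which to compare your attempt.

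On its own merits, your necessity direction (``criterion fails $\Rightarrow p\mid[\Z_K:\Z[\theta]]$'') is correct and essentially complete. The witness $\beta=\tfrac1p(G/g_i)(\theta)$ is the standard one; your verification of $\beta I\subseteq I$ on the generators $p$ and $g(\theta)$ is clean, the appeal to the determinant trick is legitimate since $I\ni p$ is a nonzero finitely generated $\Z$-submodule of the field $K$, and the check that $\beta\notin\Z[\theta]$ via $\overline{G/g_i}\neq 0$ in $\F_p[x]$ is fine.

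The sufficiency direction, however, is only a plan, as you yourself flag. You correctly identify the key external input (the $p$-radical idealiser criterion, i.e.\ $\Z[\theta]$ is $p$-maximal iff $(I:I)=\Z[\theta]$), and you correctly anticipate that the hypothesis ``$\overline g_i\nmid\overline M$ whenever $e_i\ge 2$'' is exactly what one uses to force $\overline c=0$ from $px,\,g(\theta)x\in I$. But you neither prove the idealiser criterion nor actually carry out the divisibility computation in $\F_p[x]/(\overline f)$; the phrases ``I would invoke'' and ``Granting this'' leave both steps as assertions. In particular, the reduction of $g(\theta)x\in I$ to a usable congruence among $\overline c,\overline g,\overline h,\overline M$ requires a short but genuine calculation (using $gh=G$ and $G(\theta)=-pM(\theta)$) that you have not written down. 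As it stands, this half is a reasonable outline of the argument in \cite{HC} rather than a proof.
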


Using Dedekind Criterion, Jakhar, Khanduja and Sangwan have given necessary and sufficient conditions for a prime $p$ to divide the index $[\Z_K : \Z[\theta]]$ where $\theta$ is a root of an irreducible polynomial $f(x) = x^n+ax^m+b \in \Z[x]$ over $\Q$ (cf. \cite{JNT}, \cite{IJNT}). In this note, we prove the following theorem.

\begin{theorem}\label{main}
	Let $n$ be a positive integer and $p$ be a prime number. Let $K = \Q(\theta)$ be an algebraic number field with $\theta$ a root of $f_n(x) = 1+x+\frac{x^2}{2!}+\cdots + \frac{x^n}{n!}$. Then $p\nmid [\Z_K:\Z[\theta]]$ if and only if $p^2\nmid n!$.
\end{theorem}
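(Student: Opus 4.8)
The plan is to apply the Dedekind Criterion (\thmref{dd}) to a monic integer model of $\theta$. Since $f_n$ is not monic, I first pass to $g_n(x):=n!\,f_n(x)=\sum_{j=0}^{n}\tfrac{n!}{j!}\,x^{j}\in\Z[x]$, which is monic, has the same roots as $f_n$, and — using Coleman's irreducibility of $f_n$ over $\Q$ — is the minimal polynomial of $\theta$ over $\Q$; the ring $\Z[\theta]$ and the index $[\Z_K:\Z[\theta]]$ are of course unaffected by this change, the point of passing to $g_n$ being merely that \thmref{dd} needs a monic integer polynomial. The whole statement then reduces to understanding the mod-$p$ factorization of $g_n$ together with the constant term of the Dedekind auxiliary polynomial $M(x)=\tfrac1p\big(g_n(x)-\prod_i g_i(x)^{e_i}\big)$.

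The structural heart of the argument is the identity $g_n(x)-g_n'(x)=x^{n}$, which is immediate once one notes $g_n'(x)=\sum_{j=0}^{n-1}\tfrac{n!}{j!}x^{j}$. Reducing modulo $p$, any repeated irreducible factor $\pi$ of $\overline{g_n}$ divides $\gcd\big(\overline{g_n},(\overline{g_n})'\big)$, which in turn divides $\overline{g_n}-(\overline{g_n})'=x^{n}$; hence $x$ is the only possible repeated factor of $\overline{g_n}$. Writing $m$ for the least $j\ge 0$ with $p\nmid\tfrac{n!}{j!}$ — equivalently the least $j$ with $v_p(j!)=v_p(n!)$, which behaves well since $j\mapsto v_p(j!)$ is non-decreasing — one obtains $\overline{g_n}(x)=x^{m}u(x)$ with $u$ squarefree and $u(0)\neq 0$. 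Moreover $m=0$ exactly when $p\nmid n!$, whereas $m\ge p\ge 2$ as soon as $p\mid n!$, since $v_p(j!)=0$ for all $j<p$.

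Now I run the Dedekind Criterion. If $p\nmid n!$, then $m=0$, $\overline{g_n}$ is squarefree, every exponent $e_i$ equals $1$, the criterion is satisfied trivially, and $p^{2}\nmid n!$; so both sides of the theorem hold. If $p\mid n!$, write the factorization as $\overline{g_n}=\overline{g_1}^{\,e_1}\cdots\overline{g_t}^{\,e_t}$ with $\overline{g_1}=x$, $e_1=m\ge 2$, and $e_i=1$ for $i\ge 2$. The integer lift $g_1^{e_1}\cdots g_t^{e_t}$ is divisible by $x^{m}$, hence has vanishing constant term, so $M(0)=\tfrac1p\big(g_n(0)-0\big)=\tfrac{n!}{p}$. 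The only index with $e_i\ge 2$ is $i=1$, so by the criterion $p\nmid[\Z_K:\Z[\theta]]$ if and only if $\overline{g_1}=x$ does not divide $\overline{M}$, i.e.\ if and only if $p\nmid M(0)=\tfrac{n!}{p}$, i.e.\ if and only if $v_p(n!)\le 1$, i.e.\ if and only if $p^{2}\nmid n!$. Combining the two cases proves the theorem.

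The part I expect to require the most care is the bookkeeping around the factor $x$: verifying that it is the unique repeated factor of $\overline{g_n}$, that its multiplicity $m$ jumps to at least $2$ the moment $p$ divides $n!$ (so the Dedekind test at $x$ is genuinely nontrivial), and that the lifted product $g_1^{e_1}\cdots g_t^{e_t}$ contributes nothing to the constant term, pinning $M(0)$ down to exactly $n!/p$. These points all reduce to the identity $v_p(n!/j!)=v_p(n!)-v_p(j!)$ and the monotonicity of $j\mapsto v_p(j!)$. (Alternatively, one could first establish $\operatorname{disc}(g_n)=\pm(n!)^{n}$ — which follows from $g_n'=n\,g_{n-1}$, where $g_{n-1}=(n-1)!\,f_{n-1}$, via a short resultant computation — and use $\operatorname{disc}(\theta)=[\Z_K:\Z[\theta]]^{2}\operatorname{disc}(K)$ to dispose of the case $p\nmid n!$ at once; but the identity $g_n-g_n'=x^{n}$ makes that detour unnecessary.)
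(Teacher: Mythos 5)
Your proof is correct and follows essentially the same route as the paper: pass to the monic model $n!\,f_n$, factor it mod $p$ as $x^m$ times a separable cofactor prime to $x$ (your $m$ coincides with the paper's $n-i$), and apply the Dedekind criterion, which reduces to whether $p$ divides $M(0)=n!/p$. The only differences are cosmetic: you justify separability of the cofactor via the identity $g_n-g_n'=x^n$ (arguably cleaner than the paper's brief remark that $p\nmid i$), and you settle the case $p\nmid n!$ directly by Dedekind rather than via Coleman's discriminant formula $|D_{f_n}|=(n!)^n$.
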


The following corollary is an immediate consequence of the above theorem. 
\begin{corollary}\label{cor}

Let $n\geq 4$ be an integer and $f_n(x) = 1+x+\frac{x^2}{2!}+\cdots + \frac{x^n}{n!}$. Let $K = \Q(\theta)$ be an algebraic number field with $\theta$ a root of $f_n(x)$, then $2$ divides $[\Z_K : \Z[\theta]]$. In particular, $\{1, \theta, \cdots, \theta^{n-1}\}$ can not be an integral basis of $K$.
\end{corollary}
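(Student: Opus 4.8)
The plan is to reduce to \thmref{dd} applied to a monic integral model of $\theta$. Since $f_n$ is not monic, first pass to
$g_n(x):=n!\,f_n(x)=\sum_{k=0}^{n}\tfrac{n!}{k!}x^k$, which lies in $\Z[x]$ (because $k!\mid n!$ for $k\le n$), is monic of degree $n$, vanishes at $\theta$, and is irreducible over $\Q$ since $f_n$ is (Schur, Coleman). Hence $g_n$ is the minimal polynomial of the algebraic integer $\theta$, so \thmref{dd} applies with $f=g_n$. I would also record the elementary identity $g_n(x)-g_n'(x)=x^n$, coming from $f_n'=f_{n-1}$ and $f_n-f_{n-1}=x^n/n!$; this controls separability after reduction.

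Next, analyze $\bar g_n:=g_n\bmod p$. Put $v=v_p(n!)$ and use $v_p\!\left(\tfrac{n!}{k!}\right)=v-v_p(k!)$: the coefficient of $x^k$ is nonzero modulo $p$ exactly when $v_p(k!)=v$. Since $k\mapsto v_p(k!)$ is nondecreasing and takes the value $v$ at $k=n$, the surviving exponents form the interval $\{k_0,\dots,n\}$ with $k_0=\min\{k:v_p(k!)=v\}$, so $\bar g_n(x)=x^{k_0}\bar h(x)$ with $\bar h(0)\ne 0$; moreover $k_0\ge p\ge 2$ whenever $v\ge 1$. The key point is the closed form in the borderline case $v=1$, i.e. $p\le n\le 2p-1$: reducing $(p+i)\equiv i\pmod p$ factor by factor gives $\bar g_n(x)=x^{p}\,\overline{g_{n-p}}(x)$, and since $n-p<p$ we have $p\nmid(n-p)!$, so $\overline{g_{n-p}}(0)=\overline{(n-p)!}\ne 0$ and, by $g_{n-p}-g_{n-p}'=x^{n-p}$, $\overline{g_{n-p}}$ is separable. (If $v=0$, then $p>n$, $\overline{g_n}(0)=\overline{n!}\ne 0$, and the same identity shows $\bar g_n$ itself is separable.)

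Now run \thmref{dd}, writing $M(x)=\tfrac1p\big(g_n(x)-x^{k_0}\!\prod_i g_i(x)^{e_i}\big)$ for monic lifts $g_i$ of the remaining irreducible factors. Since $k_0\ge 1$, evaluating at $0$ kills the lifted part, so $M(0)=n!/p$, and therefore $\bar x\mid\bar M$ if and only if $p^2\mid n!$. If $p^2\mid n!$ (so $v\ge 2$), then $\bar x$ occurs in $\bar g_n$ with exponent $k_0\ge 2$ and $\bar x\mid\bar M$, so the criterion fails and $p\mid[\Z_K:\Z[\theta]]$. If $v=1$, the factorization is $x^p$ times a squarefree polynomial coprime to $\bar x$, so the only candidate bad factor is $\bar x$ (exponent $p\ge 2$), but $\bar x\nmid\bar M$ because $p\nmid n!/p=M(0)$; hence the criterion holds and $p\nmid[\Z_K:\Z[\theta]]$. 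If $v=0$, $\bar g_n$ is separable and the criterion holds trivially. As $p^2\nmid n!$ is precisely $v\le 1$, this proves \thmref{main}, and Corollary~\ref{cor} is immediate since $4\mid n!$ for $n\ge 4$.

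I expect the main obstacle to be pinning down the exact shape of $\bar g_n$ in the borderline case $v_p(n!)=1$ — specifically, proving that the non-$x$ part $\overline{g_{n-p}}$ is genuinely squarefree and coprime to $\bar x$, so that \thmref{dd} only has to be tested against the single factor $\bar x$. The identity $g_m-g_m'=x^m$ combined with $\overline{g_m}(0)=\overline{m!}\ne 0$ for $m<p$ is what makes this step clean; the remaining ingredients — the minimal-polynomial normalization, the valuation bookkeeping for $\bar g_n$, and the evaluation $M(0)=n!/p$ — are routine.
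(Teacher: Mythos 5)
Your proposal is correct and follows essentially the same route as the paper: normalize to the monic model $n!f_n$, reduce mod $p$ to get $x^{k_0}$ times a separable cofactor prime to $x$, apply Dedekind's criterion, and read off $\bar x\mid\bar M$ from $M(0)=n!/p$, which is exactly the paper's proof of Theorem~\ref{main}; the corollary then follows, as in the paper, from $2^2\mid n!$ for $n\ge 4$. Your treatment is somewhat more careful than the paper's in justifying separability of the cofactor (via $g_m-g_m'=x^m$) and in checking that $\bar x$ is the only repeated factor, but the underlying argument is the same.
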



\section{Proof of Theorem \ref{main}.} Let $L = \Q(\xi)$ with $\xi$ a root of an irreducible polynomial $g(x)$ and $d_L$ denote the discriminant of the field $L$, then it is well known that the discriminant $D_g$ of  $g(x)$ and the index $[\Z_L : \Z[\xi]]$ is connected by the following formula
\begin{equation}\label{eq1}
	D_g = [\Z_L : \Z[\xi]]^2 d_L.
\end{equation}
It is given in \cite{Col} that the discriminant of $f_n(x) = 1+x+\frac{x^2}{2!}+\cdots + \frac{x^n}{n!}$ is given by 
\begin{equation}\label{eq2}
	D_{f_n} = (-1)^{\frac{n(n-1)}{2}}(n!)^n.
\end{equation}

\begin{proof}[Proof of Theorem \ref{main}.] 
	By abuse of language, we take $f_n(x)$ as 
	$$f_n(x)= x^n + nx^{n-1} + \frac{n!}{(n-2)!}x^{n-2} + \cdots + \frac{n!}{2}x^2 + n!x+n! \in \Z[x].$$ 
	Keeping in mind (\ref{eq2}), it is easy to check that the absolute value of the discriminant of $f_n(x)$ is given by $|D_{f_n}| = (n!)^{n}.$ In view of (\ref{eq1}), we see that if a prime $p$ does not divide $n!$, then $p$ does not divide $[\Z_K : \Z[\theta]]$. So assume that $p$ is a divisor of $n!$.  Suppose that $i$, $0\leq i\leq n-2$ is the smallest index such that $p|(n-i)$. We see that
	$$f_n(x) \equiv x^n + nx^{n-1} + \cdots + \frac{n!}{(n-i)!}x^{n-i} \equiv x^{n-i}(x^i+nx^{i-1} + \cdots +\frac{n!}{(n-i)!}) \mod p.$$
	Note that $p$ can not divide $i$, because if $p$ divides $i$, then in view of $p|(n-i)$, we have $p|n$, which contradicts the definition of $i$. So keeping in mind that $p\nmid i$, the polynomial $x^i+\bar{n}x^{i-1} + \cdots +\overline{\frac{n!}{(n-i)!}}$ belonging to $\Z/p\Z[x]$ is a separable polynomial. Hence applying Dedekind Criterion, we see that $p$ does not divide $[\Z_K:\Z[\theta]]$ if and only if $x$ does not divide $\overline{M}(x)$, where $M(x)$ is given by $$M(x) = \frac{1}{p}[\frac{n!}{(n-i-1)!}x^{n-i-1} + \cdots + \frac{n!}{2}x^2 + n!x+n!].$$
	Thus $p \nmid [\Z_K:\Z[\theta]]$ if and only if $p^2$ does not divide $n!$. This completes the proof of the theorem.
\end{proof}
 \medskip
  \vspace{-3mm}

 \end{document}